\newtheorem{thrm}{Theorem}[section]
\newtheorem{lem}[thrm]{Lemma}
\newtheorem{cor}[thrm]{Corollary}
\newtheorem{prop}[thrm]{Proposition}
\theoremstyle{definition}
\newtheorem{defn}[thrm]{Definition}
\newtheorem{exmple}[thrm]{Example}
\newtheorem{rmk}[thrm]{Remark}
\begin{document}

\newcommand{\vol}{\mathrm{vol}}

\newcommand{\Supp}{\mathrm{Supp}}

\newcommand{\Sing}{\mathrm{Sing}}

\newcommand{\ord}{\mathrm{ord}}

\newcommand{\Mov}{\mathrm{Mov}}

\newcommand{\mov}{\mathrm{mov}}

\title{The Movable Cone via Intersections}
\author{Brian Lehmann}
\thanks{The author is supported by NSF Award 1004363.}
\address{Department of Mathematics, Rice University \\
Houston, TX \, \, 77005}
\email{blehmann@rice.edu}

\begin{abstract}
We characterize the movable cone of divisors using intersections against curves on birational models.
\end{abstract}

\maketitle

\section{Introduction}

Cones of divisors play an essential role in describing the birational geometry of a smooth complex projective variety $X$.  A key feature of these cones is their interplay with cones of curves via duality statements. The dual of the nef cone and the pseudo-effective cone of divisors were determined by \cite{kleiman66} and \cite{bdpp04} respectively.  We consider the third cone commonly used in birational geometry: the movable cone of divisors.

\begin{defn}
Let $X$ be a smooth projective variety over $\mathbb{C}$.  The movable cone $\overline{\Mov}^{1}(X) \subset N^{1}(X)$ is the closure of the cone generated by classes of effective Cartier divisors $L$ such that the base locus of $|L|$ has codimension at least $2$.  We say a divisor is movable if its numerical class lies in $\overline{\Mov}^{1}(X)$.
\end{defn}

\begin{defn}
Let $X$ be a smooth projective variety over $\mathbb{C}$.  We say that an irreducible curve $C$ on $X$ is movable in codimension $1$, or a $\mov^{1}$-curve, if it deforms to cover a codimension $1$ subset of $X$.
\end{defn}

It is natural to guess that a divisor $L$ is movable if and only if it has non-negative intersection with every $\mov^{1}$-curve.  This is false, as demonstrated by \cite{payne06} Example 1.  Nevertheless, Debarre and Lazarsfeld have asked whether one can formulate a duality statement for movable divisors and $\mov^{1}$-curves.  This has been accomplished for toric varieties in \cite{payne06} and for Mori Dream Spaces in \cite{choi10} by taking other birational models of $X$ into account.  Our main theorem proves an analogous statement for all smooth varieties.

Before stating this theorem, we need to analyze the behavior of the movable cone under birational transformations.  Suppose that $\phi: Y \to X$ is a birational map of smooth projective varieties and that $L$ is a movable divisor on $X$.  It is possible that $\phi^{*}L$ is not movable -- for example, some $\phi$-exceptional centers could be contained in the base locus of $L$.  The following definition from \cite{nakayama04} allows us to quantify the loss in movability.

\begin{defn}
Let $X$ be a smooth projective variety over $\mathbb{C}$ and let $L$ be a pseudo-effective $\mathbb{R}$-divisor on $X$.  Fix an ample divisor $A$ on $X$.  For any prime divisor $\Gamma$ on $X$ we define
\begin{equation*}
\sigma_{\Gamma}(L) = \inf\{\mathrm{mult}_{\Gamma}(L') | L' \geq 0 \textrm{ and } L' \sim_{\mathbb{R}} L + \epsilon A \textrm{ for some } \epsilon > 0 \}
\end{equation*}
where $\sim_{\mathbb{R}}$ denotes $\mathbb{R}$-linear equivalence.  As demonstrated by \cite{nakayama04} III.1.5 Lemma, $\sigma_{\Gamma}$ is independent of the choice of $A$.
\end{defn}

Suppose that $E$ is an exceptional divisor for a birational map $\phi: Y \to X$.  The $\mathbb{R}$-divisor $\sigma_{E}(\phi^{*}L)E$ represents the ``extra contribution'' from $E$ to the non-movability of $\phi^{*}L$.  By subtracting these contributions, we can understand the geometry of the original divisor $L$.

\begin{defn} \label{movtrans}
Let $X$ be a smooth projective variety over $\mathbb{C}$ and let $L$ be a pseudo-effective $\mathbb{R}$-divisor on $X$.  Suppose that $\phi: Y \to X$ is a birational map from a smooth variety $Y$.  The movable transform of $L$ on $Y$ is defined to be
\begin{equation*}
\phi^{-1}_{\mov}(L) := \phi^{*}L - \sum_{E \, \, \phi \mathrm{-exceptional}} \sigma_{E}(\phi^{*}L)E.
\end{equation*}
\end{defn}

Note that the movable transform is not linear and is only defined for pseudo-effective divisors.  We can now state our main theorem.

\begin{thrm} \label{maintheorem}
Let $X$ be a smooth projective variety over $\mathbb{C}$ and let $L$ be a pseudo-effective $\mathbb{R}$-divisor.  $L$ is not movable if and only if there is a $\mov^{1}$-curve $C$ on $X$ and a birational morphism $\phi: Y \to X$ from a smooth variety $Y$ such that
\begin{equation*}
\phi^{-1}_{mov}(L) \cdot \widetilde{C} < 0
\end{equation*}
where $\widetilde{C}$ is the strict transform of a generic deformation of $C$.
\end{thrm}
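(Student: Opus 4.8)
The plan is to prove the two directions separately, both resting on a single structural identity for the movable transform. Writing $N_\sigma(L)=\sum_\Gamma \sigma_\Gamma(L)\,\Gamma$ and $P_\sigma(L)=L-N_\sigma(L)$ for Nakayama's $\sigma$-decomposition \cite{nakayama04}, I would first record the birational transformation rules for $\sigma$: for a prime divisor $\Gamma$ on $X$ with strict transform $\widetilde\Gamma$ one has $\sigma_{\widetilde\Gamma}(\phi^*L)=\sigma_\Gamma(L)$, while for a $\phi$-exceptional $E$ one has $\sigma_E(\phi^*L)=\sigma_{\ord_E}(L)$, the value of $\sigma$ on the divisorial valuation $\ord_E$ computed on $X$. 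Splitting $N_\sigma(\phi^*L)$ into its exceptional and non-exceptional parts then yields
\[
\phi^{-1}_{\mov}(L)=P_\sigma(\phi^*L)+\widetilde{N_\sigma(L)},
\]
and, dually, $\phi^{-1}_{\mov}(L)\cdot\widetilde C = L\cdot C-\sum_{E}\sigma_E(\phi^*L)\,(E\cdot\widetilde C)$ whenever $\widetilde C$ is the strict transform of a curve $C$ not contracted by $\phi$. These two forms are the engine for everything that follows.

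For the direction ``negative intersection $\Rightarrow$ not movable'', I would argue the contrapositive. If $L$ is movable then $N_\sigma(L)=0$, so the identity above gives $\phi^{-1}_{\mov}(L)=P_\sigma(\phi^*L)$, which is movable on $Y$. It then suffices to prove the elementary lemma that a movable class has non-negative intersection with the strict transform of a generic deformation of any $\mov^1$-curve: approximating the movable class by effective divisors whose base locus has codimension at least $2$, the deformations of the $\mov^1$-curve sweep out a divisor, so a generic member is not contained in this base locus, and intersecting against an effective representative avoiding it gives a non-negative number. Passing to the closure completes this direction.

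The substance is the converse. Assume $L$ is not movable, so $N_\sigma(L)\neq 0$; fix a component $\Gamma$ with coefficient $a:=\sigma_\Gamma(L)>0$. After pre-composing $\phi$ with a log resolution (absorbed into the final model $Y\to X$) I may assume $\Gamma$ is smooth, and I take a covering family of $\Gamma$ by general complete-intersection curves of a high multiple of an ample class, each of which is a $\mov^1$-curve. Choose a general smooth divisor $Z\subset\Gamma$ of large degree and let $\phi\colon Y\to X$ be the blow-up along $Z$, with exceptional divisor $E$. Since $\Gamma$ is smooth along $Z$ one has $\ord_E(\Gamma)=1$, and the key point is the lower bound $\sigma_E(\phi^*L)\ge a$: every effective $L'\sim_{\mathbb R} L+\epsilon A$ satisfies $\mathrm{mult}_\Gamma(L')\ge \sigma_\Gamma(L)=a$ because $\sigma_\Gamma$ is an infimum, hence $\ord_E(L')\ge a\,\ord_E(\Gamma)=a$, and taking the infimum gives $\sigma_E(\phi^*L)=\sigma_{\ord_E}(L)\ge a$. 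Taking $C$ a general member of the family and $\widetilde C$ its strict transform, the dual identity yields
\[
\phi^{-1}_{\mov}(L)\cdot\widetilde C \;\le\; L\cdot C-a\,(Z\cdot C)_\Gamma,
\]
where $(Z\cdot C)_\Gamma$ is the number of points of $C\cap Z$. Since $L\cdot C$ is fixed once the curve class is chosen while $(Z\cdot C)_\Gamma$ grows linearly in $\deg Z$, a sufficiently high-degree $Z$ forces the right-hand side to be negative, and $\widetilde C$ remains a $\mov^1$-curve because its deformations cover $\widetilde\Gamma$.

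I expect the main obstacle to be precisely the positive lower bound $\sigma_E(\phi^*L)\ge a$ together with the smoothness bookkeeping it requires: one must arrange $Z$ to be a smooth center meeting the covering curves in the smooth locus of $\Gamma$ so that $\ord_E(\Gamma)=1$, which is why the preliminary log resolution is needed, and one must invoke the valuative and birational invariance of $\sigma$ from \cite{nakayama04} to identify $\sigma_E(\phi^*L)$ with $\sigma_{\ord_E}(L)$. Conceptually, the only danger is that the negativity manufactured along $\Gamma$ is swamped by the remaining components of $\widetilde{N_\sigma(L)}$ or by the positive part; the dual identity sidesteps this entirely, since it collapses all of that data into the single fixed quantity $L\cdot C$, leaving a clean competition between $L\cdot C$ and the unbounded term $a\,(Z\cdot C)_\Gamma$.
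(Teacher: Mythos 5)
Your proof of the easy direction coincides with the paper's (movable $\Rightarrow$ $N_{\sigma}(L)=0$ $\Rightarrow$ $\phi^{-1}_{\mov}(L)=P_{\sigma}(\phi^{*}L)$ is movable, hence non-negative against generic members of covering families). For the hard direction, however, you take a genuinely different and more elementary route. The paper fixes a component $E$ of $N_{\sigma}(L)$, invokes the orthogonality theorem of \cite{bdpp04}/\cite{bfj09} to find a model on which $\phi^{-1}_{\mov}(L)|_{\widetilde{E}}$ fails to be pseudo-effective, and then applies the \cite{bdpp04} duality theorem on $\widetilde{E}$ to extract a covering family of negative curves. You instead blow up a high-degree smooth divisor $Z$ inside a component $\Gamma$ of $N_{\sigma}(L)$ and play the fixed quantity $L\cdot C$ against the unbounded exceptional contribution $\sigma_{E}(\phi^{*}L)\,(E\cdot\widetilde{C}) \geq a\,(Z\cdot C)_{\Gamma}$; no deep input is needed beyond the formal properties of $\sigma$. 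I believe your argument is correct, with one point requiring more care than your one-line justification suggests: the inequality $\sigma_{E}(\phi^{*}L)\geq a$ does not follow simply from ``$\sigma_{\Gamma}$ is an infimum,'' because Nakayama's $\sigma_{\Gamma}(L)$ is really $\lim_{\epsilon\to 0^{+}}$ of the infima over $|L+\epsilon A|_{\mathbb{R}}$ (a single effective $L'\sim_{\mathbb{R}}L+\epsilon A$ with $\epsilon$ large can have $\mathrm{mult}_{\Gamma}(L')$ far below $a$). You need to thread the limit through both sides: for each fixed $\epsilon$ one gets $\ord_{E}(L')\geq \sigma_{\Gamma}(L+\epsilon A)\cdot\ord_{E}(\Gamma)$, one identifies $\sigma_{E}(\phi^{*}L)$ with the corresponding limit on $X$ (via \cite{nakayama04} III.5, or a negativity-lemma/fiber-of-$E$ computation comparing $\mathrm{mult}_{E}(M)$ with $\mathrm{mult}_{\widetilde{\Gamma}}(M)$ for effective $M$ on $Y$), and then lets $\epsilon\to 0$. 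With that repaired, the trade-off between the two proofs is this: yours avoids the orthogonality theorem entirely for Theorem \ref{maintheorem}, but it is intrinsically divisorial --- it leans on the positive coefficient $\sigma_{\Gamma}(L)>0$ of a codimension-one component --- and so does not extend to Proposition \ref{mainprop} and Corollary \ref{maincor}, where the relevant component of $\mathbf{B}_{-}(L)$ has higher codimension and the orthogonality theorem is doing real work.
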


There does not seem to be an easy way to translate Theorem \ref{maintheorem} into a statement involving only intersections on $X$.  This is a symptom of the fact that the natural operation on movable divisors is the push-forward and not the pull-back.

The proof of Theorem \ref{maintheorem} is accomplished by reinterpreting the orthogonality theorem of \cite{bdpp04} and \cite{bfj09} using the techniques of \cite{lehmann10}.


\begin{exmple}
For surfaces Theorem \ref{maintheorem} reduces to the usual duality of the nef and pseudo-effective cones.
\end{exmple}

\begin{exmple}
Suppose that $X$ is a smooth Mori dream space and $L$ is an $\mathbb{R}$-divisor on $X$.  By running the $L$-MMP as in \cite{hk00}, we obtain a small modification $\phi: X \dashrightarrow X'$, a morphism $f: X \to Z$, and an ample $\mathbb{R}$-divisor $A$ on $Z$ such that
\begin{equation*}
\phi_{*}^{-1}L \equiv f^{*}A
\end{equation*}
where $\phi_{*}^{-1}$ denotes the strict transform.

Let $W$ be a smooth variety admitting birational maps $\psi: W \to X$ and $\psi': W \to X'$.  Using \cite{nakayama04} III.5.5 Proposition, one easily verifies that
\begin{equation*}
\psi^{-1}_{mov}(L) \equiv \psi'^{*}(\phi_{*}^{-1}L).
\end{equation*}
Thus Theorem \ref{maintheorem} implies the statements of \cite{payne06} and \cite{choi10}: for a smooth toric variety or Mori Dream Space $X$, a divisor class is movable iff its strict transform class on every $\mathbb{Q}$-factorial small modification $X'$ has non-negative intersection with every $\mov^{1}$-curve on $X'$.
\end{exmple}

\begin{exmple}
Suppose that $X$ is a smooth projective variety with $K_{X}$ numerically trivial.  \cite{choi10} explains how to apply techniques of the minimal model program to analyze $\overline{\Mov}^{1}(X)$.  Just as before, a divisor class is movable if and only if its strict transform class on every $\mathbb{Q}$-factorial small modification has non-negative intersection with every $\mov^{1}$-curve.  When $X$ is hyperk\"ahler, \cite{huybrechts03} and \cite{boucksom04} show that in fact it suffices to consider small modifications that are also smooth hyperk\"ahler varieties.

More generally, \cite{choi10} shows that small modifications can detect certain regions of $\overline{\Mov}^{1}(X)$ by using the minimal model program.
\end{exmple}

We will also prove a slightly stronger version of Theorem \ref{maintheorem} that involves the non-nef locus $\mathbf{B}_{-}(L)$ of $L$ (which will be defined in Definition \ref{nonnefdef}).  Although the non-nef locus represents the ``obstruction'' to the nefness of $L$, it is not true that $\mathbf{B}_{-}(L)$ is covered by curves $C$ with $L \cdot C < 0$.  However, Proposition \ref{mainprop} formulates a birational version of this negativity using the movable transform.

Finally, we will use Proposition \ref{mainprop} to understand $k$-movability for $k>1$.  Define the $k$-movable cone of $X$ to be the closure of the cone in $N^{1}(X)$ generated by effective Cartier divisors whose base locus has codimension at least $k-1$.  We say that a divisor is $k$-movable if its numerical class lies in the $k$-movable cone.  Note that the $1$-movable cone is just $\overline{\Mov}^{1}(X)$.

Debarre and Lazarsfeld have asked whether there is a duality between the $k$-movable cone of divisors and the closure of the cone of irreducible curves that deform to cover a codimension $k$ subset (for $0 < k < \dim X$).  Corollary \ref{maincor} constructs a birational version of this duality.  Again, this generalizes results for toric varieties in \cite{payne06} and for Mori dream spaces in \cite{choi11}.

\section{Background}

Throughout $X$ will denote a smooth projective variety over $\mathbb{C}$.  We use the notations $\sim, \sim_{\mathbb{Q}}, \sim_{\mathbb{R}}, \equiv$ to denote respectively linear equivalence, $\mathbb{Q}$-linear equivalence, $\mathbb{R}$-linear equivalence, and numerical equivalence of $\mathbb{R}$-divisors.  The volume of an $\mathbb{R}$-divisor $L$ is
\begin{equation*}
\vol_{X}(L) = \limsup_{m \to \infty} \frac{h^{0}(X,\lfloor mL \rfloor)}{m^{\dim X}}.
\end{equation*}

\subsection{Divisorial Zariski decomposition}

Let $L$ be a pseudo-effective $\mathbb{R}$-divisor on a smooth projective variety $X$.  Recall that for a prime divisor $\Gamma$ on $X$ we have defined
\begin{equation*}
\sigma_{\Gamma}(L) = \inf\{\mathrm{mult}_{\Gamma}(L') | L' \geq 0 \textrm{ and } L' \sim_{\mathbb{R}} L + \epsilon A \textrm{ for some } \epsilon > 0 \}.
\end{equation*}
where $A$ is any fixed ample divisor.  \cite{nakayama04} III.1.11 Corollary shows that there are only finitely many prime divisors $\Gamma$ on $X$ with $\sigma_{\Gamma}(L) > 0$, allowing us to make the following definition.

\begin{defn}[\cite{nakayama04} III.1.16 Definition] \label{zardecom}
Let $L$ be a pseudo-effective $\mathbb{R}$-divisor on $X$.  Define
\begin{equation*}
N_{\sigma}(L) = \sum \sigma_{E}(L) E \qquad \qquad P_{\sigma}(L) = L - N_{\sigma}(L)
\end{equation*}
The decomposition $L = N_{\sigma}(L) + P_{\sigma}(L)$ is called the \emph{divisorial Zariski
decomposition} of $L$.
\end{defn}

Note that for a birational morphism $\phi: Y \to X$ we have $\phi_{mov}^{-1}(L) = P_{\sigma}(\phi^{*}L) + \phi_{*}^{-1}N_{\sigma}(L)$ where $\phi_{*}^{-1}$ denotes the strict transform.  The divisorial Zariski decomposition is closely related to the non-nef locus of $L$.

\begin{defn} \label{nonnefdef}
Let $X$ be a smooth projective variety and let $L$ be a pseudo-effective $\mathbb{R}$-divisor on $X$.  We define the $\mathbb{R}$-stable base locus of $L$ to be the subset of $X$ given by
\begin{equation*}
\mathbf{B}_{\mathbb{R}}(L) = \bigcup \{ \Supp(L') | L' \geq 0 \textrm{ and }L' \sim_{\mathbb{R}} L \}.
\end{equation*}
The non-nef locus of $L$ is then defined to be
\begin{equation*}
\mathbf{B}_{-}(L) = \bigcup_{A \textrm{ ample }\mathbb{R}\textrm{-divisor}}  \mathbf{B}_{\mathbb{R}}(L+A).
\end{equation*}
\end{defn}

The following proposition records the basic properties of the divisorial Zariski decomposition.

\begin{prop}[\cite{nakayama04} III.1.14 Proposition, III.2.5 Lemma, V.1.3 Theorem]
Let $X$ be a smooth projective variety and let $L$ be a pseudo-effective $\mathbb{R}$-divisor.
\begin{enumerate}
\item $P_{\sigma}(L)$ is a movable $\mathbb{R}$-divisor.  In particular for any prime divisor $E$ the restriction $P_{\sigma}(L)|_{E}$ is pseudo-effective.
\item If $\phi: Y \to X$ is a birational morphism of smooth varieties and $\Gamma$ is a prime divisor on $Y$ that is not $\phi$-exceptional, then $\sigma_{\Gamma}(\phi^{*}L) = \sigma_{\phi(\Gamma)}(L)$.
\item The union of the codimension $1$ components of $\mathbf{B}_{-}(L)$ coincides with $\Supp(N_{\sigma}(L))$.
\end{enumerate}
\end{prop}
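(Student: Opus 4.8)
The plan is to establish the three assertions in the order (2), (3), (1): the pullback formula (2) is a direct computation, (3) is independent of (1), and (1) will invoke (3). I would prove (2) by first reducing to the case of a big divisor $B$, where $\sigma_\Gamma$ is computed directly as $\inf\{\mathrm{mult}_\Gamma(D') \mid D' \geq 0,\ D' \sim_{\mathbb{R}} B\}$. Since $\Gamma$ is not $\phi$-exceptional, pulling back an effective $D \sim_{\mathbb{R}} B$ gives $\mathrm{mult}_\Gamma(\phi^{*}D) = \mathrm{mult}_{\phi(\Gamma)}(D)$, so $\sigma_\Gamma(\phi^{*}B) \leq \sigma_{\phi(\Gamma)}(B)$; pushing forward an effective $D' \sim_{\mathbb{R}} \phi^{*}B$ gives $\mathrm{mult}_{\phi(\Gamma)}(\phi_{*}D') = \mathrm{mult}_\Gamma(D')$ and the reverse inequality, so the two coincide. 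I would then pass to pseudo-effective $L$ via $\sigma_\Gamma(\phi^{*}L) = \lim_{\epsilon \to 0^{+}}\sigma_\Gamma(\phi^{*}(L+\epsilon A))$, the only subtlety being that $\phi^{*}A$ is merely nef and big; this is absorbed using the independence of $\sigma_\Gamma$ from the chosen ample divisor (\cite{nakayama04} III.1.5 Lemma) and a further small ample perturbation on $Y$.

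For (3) I would show that a prime divisor $E$ is a codimension $1$ component of $\mathbf{B}_{-}(L)$ if and only if $\sigma_E(L) > 0$. Writing $s(D) = \inf\{\mathrm{mult}_E(D') \mid D' \geq 0,\ D' \sim_{\mathbb{R}} D\}$, the loci $\mathbf{B}_{\mathbb{R}}(L + A)$ grow as the ample class $A$ shrinks, and $s(L+A)$ increases to $\sigma_E(L)$ as $A \to 0$. If $\sigma_E(L) > 0$ then $s(L+A) > 0$ for all sufficiently small $A$, so every effective representative of $L + A$ contains $E$ and hence $E \subseteq \mathbf{B}_{-}(L)$. If instead $\sigma_E(L) = 0$ then $s(L + A) = 0$ for every $A$, and one must produce an effective representative of $L + A$ of multiplicity zero along $E$ to conclude $E \not\subseteq \mathbf{B}_{-}(L)$.

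For (1) the first step is the idempotence $\sigma_\Gamma(P_\sigma(L)) = 0$ for every prime divisor $\Gamma$. Using the finiteness of the divisors with positive $\sigma_\Gamma(L)$ (\cite{nakayama04} III.1.11 Corollary), I would choose near-optimal effective representatives of $L + \epsilon A$ and subtract the fixed part $N_\sigma(L + \epsilon A)$; the result is an effective representative of $P_\sigma(L + \epsilon A)$ whose multiplicity along each $\Gamma$ tends to $0$, and since $P_\sigma(L + \epsilon A) \to P_\sigma(L)$ this gives $\sigma_\Gamma(P_\sigma(L)) = 0$. By (3) the non-nef locus $\mathbf{B}_{-}(P_\sigma(L))$ then has no divisorial component, so for each ample $A$ there are effective representatives of $P_\sigma(L) + A$ with base locus of codimension at least $2$; letting $A \to 0$ exhibits $P_\sigma(L)$ as a limit of such classes, that is, as a movable class. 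The ``in particular'' clause follows at once: a general effective representative of a class approximating $P_\sigma(L)$ cannot contain the prime divisor $E$, since $E$ has codimension $1$ while the base locus has codimension at least $2$, so it restricts to an effective divisor on $E$ and $P_\sigma(L)|_E$ is a limit of pseudo-effective classes.

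I expect the main obstacle to be the step common to (3) and (1): upgrading ``the multiplicity along $E$ can be made arbitrarily small'' to ``some effective representative has multiplicity exactly zero along $E$'', so that the numerical vanishing of $\sigma_E$ becomes a genuine statement about supports and hence about membership in $\overline{\Mov}^{1}(X)$. A soft limiting argument is not enough here; I would instead rely on the openness of the big cone together with Nakayama's structural analysis of base loci (\cite{nakayama04} V.1.3 Theorem).
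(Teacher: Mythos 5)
The paper offers no proof of this proposition: it is imported verbatim from Nakayama, with the three citations (III.1.14 Proposition, III.2.5 Lemma, V.1.3 Theorem) covering parts (1), (2), (3) respectively, so there is no in-paper argument to measure yours against. On its own terms your outline follows the standard Nakayama route, and the architecture (2) $\to$ (3) $\to$ (1) is sound; the big-case computation in (2) and the forward direction of (3) are correct as written.

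Two remarks on the details. First, the step you single out as the main obstacle in (3) --- producing an effective representative of $L+A$ with multiplicity exactly zero along $E$ when $\sigma_{E}(L)=0$ --- is actually soft: split $A = A_{1} + A_{2}$, choose $D \sim_{\mathbb{R}} L + A_{1}$ effective with $c = \mathrm{mult}_{E}(D)$ small enough that $A_{2} + cE$ is still ample, and replace the $cE$ part of $D$ by a general effective representative of $A_{2} + cE$ not containing $E$; no appeal to V.1.3 is needed for that implication. Second, the genuinely delicate point sits in (1) and you pass over it too quickly. Knowing that $\mathbf{B}_{-}(P_{\sigma}(L))$ has no divisorial component controls multiplicities one prime divisor at a time, but membership in $\overline{\Mov}^{1}(X)$ as defined in this paper requires exhibiting $P_{\sigma}(L)$ as a limit in $N^{1}(X)$ of classes of integral Cartier divisors $M$ with $\mathbf{B}(|M|)$ of codimension at least $2$. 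The standard mechanism is to take a $\mathbb{Q}$-perturbation $D$ of $P_{\sigma}(L)$, write $mD = \mathrm{Fix}|mD| + M_{m}$ with $M_{m}$ mobile, and show that the classes $\tfrac{1}{m}\mathrm{Fix}|mD|$ converge to $N_{\sigma}(D)$ in $N^{1}(X)$; convergence of each $\tfrac{1}{m}\mathrm{mult}_{\Gamma}\mathrm{Fix}|mD|$ separately does not immediately yield convergence of the classes, since infinitely many $\Gamma$ may appear among the fixed parts, and a compactness argument is needed. That is precisely the content of Nakayama III.1.14, so deferring to the reference is legitimate --- but it means your sketch, like the paper, ultimately rests on the cited results rather than replacing them.
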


\subsection{Numerical dimension and orthogonality}

Given a pseudo-effective divisor $L$, the numerical dimension $\nu(L)$ of \cite{nakayama04} and \cite{bdpp04} is a numerical measure of the ``positivity'' of $L$.  There is also a restricted variant $\nu_{X|V}(L)$ introduced in \cite{bfj09}; since the definition is somewhat involved, we will only refer to a special subcase using an alternate characterization from \cite{lehmann10}.

\begin{defn} \label{numdimdef}
Let $L$ be a pseudo-effective divisor on $X$.  Fix a prime divisor $E$ on $X$ and choose $L' \equiv L$ whose support does not contain $E$.  We say $\nu_{X|E}(L) = 0$ if
\begin{equation*}
\liminf_{\phi} \vol_{\widetilde{E}}(P_{\sigma}(\phi^{*}L')|_{\widetilde{E}}) = 0
\end{equation*}
where $\phi: \widetilde{X} \to X$ varies over all birational maps and $\widetilde{E}$ denotes the strict transform of $E$.
\end{defn}

The connection with geometry is given by the following version of the orthogonality theorem of \cite{bdpp04} and \cite{bfj09}.

\begin{thrm}[\cite{bfj09}, Theorem 4.15]
Let $L$ be a pseudo-effective divisor.  If a prime divisor $E \subset X$ is contained in $\Supp(N_{\sigma}(L))$ then $\nu_{X|E}(L) = 0$.
\end{thrm}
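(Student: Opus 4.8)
The plan is to show that if $E \subset \Supp(N_\sigma(L))$, then $\sigma_E(L) > 0$, and to use this positive multiplicity to force the restricted volume on the right to vanish after passing to suitable birational models. Concretely, fix $L' \equiv L$ whose support does not contain $E$, as in Definition \ref{numdimdef}. The key point is that since $E$ is a codimension $1$ component of $\mathbf{B}_-(L)$, part (3) of the preceding proposition guarantees $\sigma_E(L) = a > 0$. I would like to transfer this positivity to the positive part $P_\sigma(\phi^* L')$ restricted to $\widetilde{E}$, and then argue that the volume of the restriction shrinks to zero along a well-chosen tower of blowups $\phi$.

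First I would set up the comparison between $L$ and its numerical representative $L'$. Because $\sigma_\Gamma$ depends only on the numerical class (this is implicit in the independence-of-$A$ statement, and is made precise in \cite{nakayama04}), we have $\sigma_E(L') = \sigma_E(L) = a > 0$, even though $E \not\subset \Supp(L')$; the point is that $L'$ still carries the same asymptotic obstruction along $E$ even if it is not literally contained in $\Supp L'$. Next I would analyze how $P_\sigma$ behaves under pullback: for a birational morphism $\phi \colon \widetilde{X} \to X$ we have $P_\sigma(\phi^* L') = \phi^* L' - N_\sigma(\phi^* L')$, and part (2) of the proposition controls the coefficients $\sigma_\Gamma(\phi^* L')$ in terms of the $\sigma$-invariants downstairs for non-exceptional $\Gamma$. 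This lets me understand the coefficient of $\widetilde{E}$ in $N_\sigma(\phi^* L')$ and hence the behavior of $P_\sigma(\phi^* L')$ in a neighborhood of $\widetilde{E}$.

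The heart of the argument is then to bound the restricted volume $\vol_{\widetilde{E}}(P_\sigma(\phi^* L')|_{\widetilde{E}})$ and show its liminf over all $\phi$ is $0$. The natural strategy is to exploit the fact that along $E$ the divisor $P_\sigma(L')$ is, in the restricted sense, exactly ``used up'': since $\sigma_E(L') > 0$, the class $P_\sigma(L')$ lies on the boundary where $E$ stops contributing to sections, so its restriction to $E$ should have vanishing numerical dimension. I would make this rigorous by choosing $\phi$ to be a sequence of blowups that extract more and more of the negative contribution concentrated near $E$ — for instance, blowing up along $E$ and its successive strict transforms — so that the positive part $P_\sigma(\phi^* L')|_{\widetilde{E}}$ loses volume at each stage. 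The alternate characterization from \cite{lehmann10} referenced just before the theorem is precisely what identifies this liminf of restricted volumes with $\nu_{X|E}(L) = 0$, so the problem reduces to producing a tower of models witnessing the decay.

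The main obstacle, I expect, is the last step: controlling the restricted volume $\vol_{\widetilde{E}}(P_\sigma(\phi^* L')|_{\widetilde{E}})$ uniformly enough to conclude that its liminf is genuinely $0$ rather than merely small. The subtlety is that $P_\sigma$ is not compatible with pullback in a naive way — $P_\sigma(\phi^* L')$ is generally smaller than $\phi^* P_\sigma(L')$ — and restricted volumes can behave erratically under restriction to a moving divisor $\widetilde{E}$. I would therefore lean heavily on the orthogonality inequalities of \cite{bdpp04} and \cite{bfj09}, which relate the volume lost in passing from $\phi^* L'$ to $P_\sigma(\phi^* L')$ to intersection numbers against $\widetilde{E}$; quantifying that the positive multiplicity $a$ along $E$ forces this loss to consume all the restricted positivity is the technical crux, and is exactly where the cited Theorem 4.15 of \cite{bfj09} (restated here) does the essential work.
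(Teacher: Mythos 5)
There is a genuine gap, and in fact two distinct problems. First, your argument is circular: after setting up the comparison between $L$ and $L'$ and proposing a tower of models, you concede that the ``technical crux'' of forcing the restricted volumes to decay ``is exactly where the cited Theorem 4.15 of \cite{bfj09} (restated here) does the essential work.'' That is the statement you are supposed to be proving, so the heart of your argument assumes its own conclusion. Second, the concrete mechanism you propose for producing the witnessing models --- ``blowing up along $E$ and its successive strict transforms'' --- does nothing: $E$ is a prime divisor on a smooth variety, so blowing up along it (or along its strict transforms, which remain divisors) is an isomorphism. No tower of such blowups can change $P_{\sigma}(\phi^{*}L')|_{\widetilde{E}}$ or its volume. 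The intermediate heuristic, that $\sigma_{E}(L') > 0$ means $P_{\sigma}(L')$ ``lies on the boundary where $E$ stops contributing to sections, so its restriction to $E$ should have vanishing numerical dimension,'' is a restatement of the desired conclusion rather than an argument for it; the implication from positivity of $\sigma_{E}$ to vanishing of $\nu_{X|E}$ is precisely the nontrivial content of orthogonality.

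The intended proof is not a from-scratch argument but a short reduction to the literature, and it has exactly two ingredients you did not isolate. One perturbs $L$ to $L + \epsilon A$ with $A$ ample and $\epsilon$ small enough that $\Supp(N_{\sigma}(L)) = \Supp(N_{\sigma}(L+\epsilon A))$ (this is needed to place the divisor in the setting where \cite{bfj09} Theorem 4.15 applies, and is harmless because $E$ remains a component of the negative part). One then invokes \cite{bfj09} Theorem 4.15 for $L + \epsilon A$, and translates its notion of restricted numerical dimension into the volume characterization of Definition \ref{numdimdef} via \cite{lehmann10} Theorem 7.1. If you want to write a self-contained proof of the orthogonality statement itself, you would need the quantitative estimates of \cite{bdpp04} and \cite{bfj09} in earnest, not merely a citation of the theorem at the point where all the work happens.
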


\begin{proof}
Fix an ample divisor $A$.  Choose an $\epsilon > 0$ sufficiently small so that $\Supp(N_{\sigma}(L)) = \Supp(N_{\sigma}(L + \epsilon A))$ and apply \cite{bfj09} Theorem 4.15.  The comparison between the numerical dimension of \cite{bfj09} and Definition \ref{numdimdef} is given by \cite{lehmann10} Theorem 7.1.
\end{proof}

\section{Proof}

\begin{proof}[Proof of Theorem \ref{maintheorem}:]

Suppose that $L$ is not movable.  Denote by $E$ a fixed divisorial component of $N_{\sigma}(L)$.

Fix a sufficiently general ample divisor $A$ on $X$ and choose $\epsilon$ small enough so that $E$ is a component of $N_{\sigma}(L+\epsilon A)$.  Applying the orthogonality theorem of \cite{bdpp04}, we see there is a birational map $\phi: Y \to X$ so that
\begin{enumerate}
\item $\widetilde{E}$ is smooth.
\item $\vol_{\widetilde{E}}(P_{\sigma}(\phi^{*}(L+\epsilon A))|_{\widetilde{E}}) < \vol_{E}(A|_{E}) = \vol_{\widetilde{E}}(\phi^{*}A|_{\widetilde{E}})$.
\item The strict transform of every component of $N_{\sigma}(L)$ is disjoint.
\end{enumerate}
There is a unique expression
\begin{equation*}
P_{\sigma}(\phi^{*}(L+\epsilon A)) = P_{\sigma}(\phi^{*}L) + \phi^{*}A + \alpha(\epsilon)\widetilde{E} + F
\end{equation*}
where $\widetilde{E}$ is the strict transform of $E$,  $F$ is an effective divisor with $F \leq N_{\sigma}(\phi^{*}L)$ and the support of $F$ does not contain $E$, and $\alpha(\epsilon)$ is positive and goes to $0$ as $\epsilon$ goes to $0$.  By shrinking $\epsilon$ we may ensure that $\alpha(\epsilon) < \sigma_{E}(L)$.

Condition (2) above, along with Lemma \ref{vollemma}, show that  the restriction $(P_{\sigma}(\phi^{*}L) + \alpha(\epsilon)\widetilde{E})|_{\widetilde{E}}$ is not pseudo-effective for any $\epsilon > 0$.  Since $\alpha(\epsilon) < \sigma_{E}(L)$, we also have that $(P_{\sigma}(\phi^{*}L) + \sigma_{E}(L)\widetilde{E})|_{\widetilde{E}}$ is not pseudo-effective.  As the strict transform of components of $N_{\sigma}(L)$ are disjoint, the restriction of $P_{\sigma}(\phi^{*}L) + \phi_{*}^{-1} N_{\sigma}(L)$ to $\widetilde{E}$ is still not pseudo-effective.

By \cite[0.2 Theorem]{bdpp04} there is a curve $\widetilde{C}$ whose deformations cover $\widetilde{E}$ such that $$(P_{\sigma}(\phi^{*}L) + \phi_{*}^{-1} N_{\sigma}(L)) \cdot \widetilde{C} < 0.$$  Since $\widetilde{E}$ is not $\phi$-exceptional, $C = \phi(\widetilde{C})$ is a $\mov^{1}$-curve.

Conversely, if $L$ is movable, then $\phi^{-1}_{mov}(L) = P_{\sigma}(\phi^{*}L)$ is also movable for every $\phi$.  Thus every movable transform has non-negative intersection with the strict transform of every $\mov^{1}$-curve general in its family.
\end{proof}

\begin{lem} \label{vollemma}
Let $X$ be a smooth projective variety and let $L$ and $L'$ be pseudo-effective divisors on $X$.  Then $\vol_{X}(L + L') \geq \vol_{X}(L)$.
\end{lem}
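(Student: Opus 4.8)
The plan is to prove the monotonicity $\vol_X(L+L') \geq \vol_X(L)$ for pseudo-effective divisors by reducing to the case where one of the divisors is ample, and then using the known behavior of volume under the addition of sections. First I would observe that volume is continuous on the big cone and extends continuously to the pseudo-effective boundary, where it takes the value $0$; this lets me handle degenerate cases (where $L$ or $L+L'$ lies on the boundary and has volume $0$) separately, since the inequality is automatic whenever $\vol_X(L) = 0$. So the substantive case is when $L$ is big, and I may also assume $L'$ is big after a limiting argument.

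The key step is to compare global sections. For a big divisor the volume is computed as a genuine limit rather than a $\limsup$, and the quantity $h^0(X, \lfloor mL \rfloor)$ counts sections of a line bundle. The natural approach is to produce, for each $m$, an injection of linear systems $|\lfloor mL \rfloor| \hookrightarrow |\lfloor m(L+L')\rfloor|$ obtained by multiplying by a fixed section. Concretely, since $L'$ is pseudo-effective I would pass to $L' + \delta A$ for a small ample $A$ to make it effective (up to $\mathbb{R}$-linear equivalence), choose an effective divisor $D \sim_{\mathbb{R}} m(L' + \delta A)$, and use tensoring with the section cutting out $D$ to embed the sections of $\lfloor mL\rfloor$ into those of $\lfloor m(L+L'+\delta A)\rfloor$. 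Taking the limit in $m$ gives $\vol_X(L + L' + \delta A) \geq \vol_X(L)$, and then letting $\delta \to 0$ and invoking continuity of the volume function recovers the desired inequality.

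The main obstacle is the bookkeeping around the rounding operation $\lfloor \cdot \rfloor$ and the passage from $\mathbb{R}$-linear to honest linear equivalence of integral divisors. The volume is defined using $\lfloor mL \rfloor$, so to compare $h^0$ of $\lfloor mL\rfloor$ with $h^0$ of $\lfloor m(L+L')\rfloor$ I must ensure that $\lfloor mL \rfloor + (\text{effective integral divisor})$ is actually $\lfloor m(L+L') \rfloor$ up to a term whose contribution washes out in the limit. This is where I expect the real work to lie, since floors do not add cleanly. A clean way to sidestep much of this difficulty is to reduce to the case of $\mathbb{Q}$-divisors, where on a fixed resolution one can clear denominators and argue along a single arithmetic progression $m = k m_0$, and then extend to $\mathbb{R}$-divisors by the continuity of $\vol_X$ on the pseudo-effective cone. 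With this reduction, the section-multiplication map becomes a straightforward inclusion of complete linear systems, and the inequality follows at once.
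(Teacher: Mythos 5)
Your argument is correct, and it rests on the same underlying fact as the paper's proof (adding a divisor that is numerically equivalent to an effective one cannot decrease the volume), but the reduction to that fact is genuinely different. The paper writes $L + L' = (1-\epsilon)L + (\epsilon L + L')$; since $L$ may be assumed big, the second summand $\epsilon L + L'$ is big, so the effective-monotonicity fact applies directly and gives $\vol_X(L+L') \geq \vol_X((1-\epsilon)L) = (1-\epsilon)^{\dim X}\vol_X(L)$ by homogeneity of the volume, after which one lets $\epsilon \to 0$ as a limit of real numbers --- no continuity of the volume function is needed, and the divisor whose volume is being bounded below is never perturbed. You instead perturb $L'$ to $L' + \delta A$ to gain bigness and then invoke continuity of $\vol_X$ at $L + L'$ (which is legitimate, since $L+L'$ is big when $L$ is, so you are taking the limit at an interior point of the big cone). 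Your version requires two extra standard inputs --- continuity of the volume on the big cone, and the $\mathbb{Q}$-divisor/arithmetic-progression bookkeeping you correctly flag to handle the floors and pass from $\mathbb{R}$-linear equivalence to honest sections --- whereas the paper's version quietly absorbs all of that into the single cited property that a big class is numerically effective and volume is monotone under adding such a class. What your approach buys is that it actually proves that monotonicity statement from first principles via section multiplication, rather than treating it as known; what the paper's decomposition buys is brevity and the avoidance of any limiting argument on the volume function itself.
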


\begin{proof}
We may assume $L$ is big since otherwise the inequality is automatic.  Then for any sufficiently small $\epsilon > 0$ we have
$$\vol_{X}(L+L') = \vol_{X}((1-\epsilon) L + (\epsilon L + L')) \geq (1-\epsilon)^{\dim X}\vol_{X}(L)$$
since $\epsilon L + L'$ is big.
\end{proof}

We now give an alternate formulation of Theorem \ref{maintheorem}.

\begin{prop} \label{mainprop}
Let $X$ be a smooth projective variety and let $L$ be a pseudo-effective $\mathbb{R}$-divisor.  Suppose that $V$ is an irreducible subvariety of $X$ contained in $\mathbf{B}_{-}(L)$ and let $\psi: X' \to X$ be a smooth birational model resolving the ideal sheaf of $V$.  Then there is a birational morphism $\phi: Y \to X'$ from a smooth variety $Y$ and an irreducible curve $\widetilde{C}$ on $Y$ such that
\begin{equation*}
\phi^{-1}_{mov}(\psi^{*}L) \cdot \widetilde{C} < 0
\end{equation*}
and $\psi \circ \phi(\widetilde{C})$ deforms to cover $V$.
\end{prop}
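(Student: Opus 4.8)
The plan is to reduce the statement to the argument already used for Theorem~\ref{maintheorem}, applied to $\psi^{*}L$ on $X'$ and to a prime divisor lying over $V$. Since $\psi$ resolves $\mathcal{I}_{V}$, the inverse image $\psi^{-1}(V)$ supports an effective Cartier divisor, and at least one of its prime components $E$ dominates $V$, i.e.\ $\psi(E)=V$. The key claim is that the hypothesis $V\subseteq\mathbf{B}_{-}(L)$ forces $\sigma_{E}(\psi^{*}L)>0$, so that $E$ is a divisorial component of $N_{\sigma}(\psi^{*}L)$. I expect this to be the main obstacle, since it asserts that \emph{positive-codimension} components of the non-nef locus, and not merely its divisorial part, are detected by divisorial valuations with positive $\sigma$-invariant. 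I would prove it by comparing effective representatives: an effective $M'\sim_{\mathbb{R}}\psi^{*}L+\epsilon A'$ pushes forward to an effective representative of $L+\epsilon\psi_{*}A'$, and after writing the big class $\psi_{*}A'$ as an ample plus an effective divisor one can use that every effective representative of $L+A$ contains $V$ (for all small ample $A$, which is the meaning of $V\subseteq\mathbf{B}_{-}(L)$) to bound $\mathrm{mult}_{E}(M')$ away from $0$; alternatively one cites the description of $\mathbf{B}_{-}$ in terms of $\sigma$-invariants from \cite{nakayama04}.

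Granting $\sigma_{E}(\psi^{*}L)>0$, we are exactly in the situation of the proof of Theorem~\ref{maintheorem} with $(X,L)$ replaced by $(X',\psi^{*}L)$ and with divisorial component $E$ of $N_{\sigma}(\psi^{*}L)$. I would then repeat that argument verbatim: fix a general ample $A'$ on $X'$ and a small $\epsilon$, and apply the orthogonality theorem of \cite{bdpp04} to produce $\phi\colon Y\to X'$ for which $\widetilde{E}$ is smooth, the strict transforms of the components of $N_{\sigma}(\psi^{*}L)$ are disjoint, and
\begin{equation*}
\vol_{\widetilde{E}}(P_{\sigma}(\phi^{*}(\psi^{*}L+\epsilon A'))|_{\widetilde{E}})<\vol_{\widetilde{E}}(\phi^{*}A'|_{\widetilde{E}}).
\end{equation*}
Feeding this into Lemma~\ref{vollemma} exactly as before shows that $(P_{\sigma}(\phi^{*}\psi^{*}L)+\sigma_{E}(\psi^{*}L)\widetilde{E})|_{\widetilde{E}}$ is not pseudo-effective, and the disjointness then gives that the restriction of $\phi^{-1}_{\mov}(\psi^{*}L)=P_{\sigma}(\phi^{*}\psi^{*}L)+\phi_{*}^{-1}N_{\sigma}(\psi^{*}L)$ to $\widetilde{E}$ is not pseudo-effective.

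Finally I would extract the curve and check the covering statement. By \cite[0.2 Theorem]{bdpp04} the non-pseudo-effectivity of $\phi^{-1}_{\mov}(\psi^{*}L)|_{\widetilde{E}}$ is witnessed by a strongly movable class, which may be taken to be the pushforward of a general complete intersection $A_{1}\cdots A_{\dim\widetilde{E}-1}$ under some birational model $\mu\colon\widetilde{E}'\to\widetilde{E}$. Its general member $\widetilde{C}$ is an irreducible curve whose deformations cover $\widetilde{E}$ and which satisfies $\phi^{-1}_{\mov}(\psi^{*}L)\cdot\widetilde{C}<0$ by the projection formula. Because $\widetilde{C}$ covers $\widetilde{E}$ and $\psi\circ\phi(\widetilde{E})=\psi(E)=V$, the images of the deformations of $\widetilde{C}$ sweep out $V$. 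The one point that is new relative to Theorem~\ref{maintheorem}---where $E$ was already a divisor on $X$, so that curves covering $\widetilde{E}$ automatically mapped to curves covering $E$---is that $\widetilde{C}$ must not be contracted by $\psi\circ\phi$. This is exactly why a complete intersection curve is used: when $\dim V\geq 1$ a general such curve dominates a curve in $V$ rather than lying in a fibre of $\widetilde{E}\to V$, so $\psi\circ\phi(\widetilde{C})$ is an honest curve whose deformations cover $V$ (and when $V$ is a point there is nothing to check).
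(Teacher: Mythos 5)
Your proposal is correct and follows essentially the same route as the paper: take the exceptional divisor $E$ over $V$, observe that $V\subseteq\mathbf{B}_{-}(L)$ forces $E\subseteq\Supp(N_{\sigma}(\psi^{*}L))$, rerun the argument of Theorem~\ref{maintheorem} for $(X',\psi^{*}L)$ to make $\phi^{-1}_{\mov}(\psi^{*}L)|_{\widetilde{E}}$ non-pseudo-effective, and then extract a movable curve on $\widetilde{E}$ that is not contracted over $V$. The only differences are that you unpack two steps the paper handles by citation --- the positivity of $\sigma_{E}(\psi^{*}L)$ (where your fallback to Nakayama's description of $\mathbf{B}_{-}$ via $\sigma$-invariants is the reliable route, the push-forward sketch being somewhat loose) and the non-contraction of the complete-intersection curve, which the paper gets directly from \cite[2.4 Theorem]{bdpp04}.
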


\begin{proof}
Let $E$ be the $\psi$-exceptional divisor dominating $V$.  Since we have $E \subset \Supp(N_{\sigma}(\psi^{*}L))$, we may argue as in the proof of Theorem \ref{maintheorem} for $\psi^{*}L$ and $E$ to find a birational map $\phi$ such that $\phi^{-1}_{mov}(\psi^{*}L)|_{\widetilde{E}}$ is not pseudo-effective.

\cite[2.4 Theorem]{bdpp04} shows that there is some curve $\widetilde{C}$ on $\widetilde{E}$ with $\phi^{-1}_{mov}(\psi^{*}L) \cdot \widetilde{C} < 0$ such that $\widetilde{C}$ deforms to cover $\widetilde{E}$ and is not contracted by any morphism from $\widetilde{E}$ to a variety of positive dimension.  Choosing $\widetilde{C}$ on $\widetilde{E}$ to satisfy this stronger property, we obtain the statement of Proposition \ref{mainprop}.
\end{proof}

Proposition \ref{mainprop} shows that the non-nef locus is covered by $L$-negative curves in a birational sense.  Alternatively, one can rephrase this result using $k$-movability.

\begin{cor} \label{maincor}
Let $X$ be a smooth projective variety and let $L$ be a pseudo-effective $\mathbb{R}$-divisor.  Then $L$ is not $k$-movable if and only if there is a birational morphism $\psi: X' \to X$ from a smooth variety $X'$, a birational morphism $\phi: Y \to X'$ from a smooth variety $Y$, and an irreducible curve $\widetilde{C}$ on $Y$ such that
\begin{equation*}
\phi^{-1}_{mov}(\psi^{*}L) \cdot \widetilde{C} < 0
\end{equation*}
and $\psi \circ \phi(\widetilde{C})$ deforms to cover a $k$-dimensional subset of $V$.
\end{cor}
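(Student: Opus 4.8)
The plan is to convert $k$-movability into a statement about the codimension of the non-nef locus and then feed the result into Proposition \ref{mainprop}. The key dictionary, generalizing the characterization of $\overline{\Mov}^{1}(X)$ used implicitly for $k=1$, is that a pseudo-effective $\mathbb{R}$-divisor $L$ fails to be $k$-movable if and only if $\mathbf{B}_{-}(L)$ contains an irreducible subvariety of codimension $k$. To establish this I would note that, because $\mathbf{B}_{-}(L_{1}+L_{2}) \subseteq \mathbf{B}_{-}(L_{1}) \cup \mathbf{B}_{-}(L_{2})$ and $\mathbf{B}_{-}(D) \subseteq \mathbf{B}(|D|)$, the classes whose non-nef locus has codimension at least $k+1$ form a closed convex subcone containing every generator of the $k$-movable cone, the reverse containment being proved as in the divisorial ($k=1$) case. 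Thus $L$ is not $k$-movable exactly when some component of $\mathbf{B}_{-}(L)$ has codimension at most $k$, and intersecting such a component with general hyperplanes yields an irreducible $V \subseteq \mathbf{B}_{-}(L)$ of codimension exactly $k$.

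Granting the dictionary, the forward implication is an immediate application of Proposition \ref{mainprop}. If $L$ is not $k$-movable, select $V \subseteq \mathbf{B}_{-}(L)$ irreducible of codimension $k$ and let $\psi : X' \to X$ resolve its ideal sheaf. Proposition \ref{mainprop} then produces a birational morphism $\phi : Y \to X'$ and an irreducible curve $\widetilde{C}$ on $Y$ with $\phi^{-1}_{mov}(\psi^{*}L) \cdot \widetilde{C} < 0$ and with $\psi \circ \phi(\widetilde{C})$ deforming to cover $V$, which is exactly the configuration demanded by the corollary.

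For the converse I would argue directly that the existence of such a curve forces the covered subvariety into $\mathbf{B}_{-}(L)$. Set $g = \psi \circ \phi$ and $D = \phi^{-1}_{mov}(\psi^{*}L) = P_{\sigma}(g^{*}L) + \phi^{-1}_{*}N_{\sigma}(\psi^{*}L)$, and let $\widetilde{S} \subseteq Y$ be the subvariety swept out by the deformations of $\widetilde{C}$, so that $V := g(\widetilde{S})$ is the covered codimension-$k$ set and $g|_{\widetilde{S}}$ is birational onto $V$. Suppose $V \not\subseteq \mathbf{B}_{-}(L)$. Using $\mathbf{B}_{-}(g^{*}L) = g^{-1}\mathbf{B}_{-}(L)$, the subvariety $\widetilde{S}$ lies in neither $\mathbf{B}_{-}(g^{*}L)$ nor $\Supp(\phi^{-1}_{*}N_{\sigma}(\psi^{*}L))$, the latter being contained in $\mathbf{B}_{-}(g^{*}L)$. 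Hence the effective summand $\phi^{-1}_{*}N_{\sigma}(\psi^{*}L)$ restricts to an effective class on $\widetilde{S}$, while $P_{\sigma}(g^{*}L)$ restricts to a pseudo-effective one, so $D|_{\widetilde{S}}$ is pseudo-effective. Since $\widetilde{C}$ deforms to cover $\widetilde{S}$ its class is movable on a resolution of $\widetilde{S}$, and the duality of \cite{bdpp04} yields $D \cdot \widetilde{C} = D|_{\widetilde{S}} \cdot \widetilde{C} \geq 0$, contradicting the hypothesis. Therefore $V \subseteq \mathbf{B}_{-}(L)$, so $\mathbf{B}_{-}(L)$ has a component of codimension at most $k$ and $L$ is not $k$-movable.

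The main obstacle is the restriction-positivity step: that $P_{\sigma}(g^{*}L)|_{\widetilde{S}}$ is pseudo-effective whenever $\widetilde{S} \not\subseteq \mathbf{B}_{-}(g^{*}L)$. This is the higher-codimension analogue of the pseudo-effectivity of $P_{\sigma}(L)|_{E}$ for prime divisors $E$ recorded above, but unlike that statement it is not a formal consequence of the movability of $P_{\sigma}$, since a movable class may restrict to a non-pseudo-effective class on a subvariety of codimension at least $2$. I would instead extract it from the defining property of the non-nef locus: for each small ample $A$ one chooses an effective representative of $g^{*}L + \epsilon A$ whose support omits the generic point of $\widetilde{S}$ while keeping its multiplicities along the components of $N_{\sigma}(g^{*}L)$ close to the $\sigma$-values, so that subtracting $N_{\sigma}(g^{*}L)$ and restricting leaves an effective class on $\widetilde{S}$ in the limit $\epsilon \to 0$. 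The ancillary facts — the identity $\mathbf{B}_{-}(g^{*}L) = g^{-1}\mathbf{B}_{-}(L)$ for a birational morphism and the reduction to a resolution of the possibly singular $\widetilde{S}$ needed to invoke \cite{bdpp04} — are routine.
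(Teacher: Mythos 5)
Your proposal is correct and follows essentially the same route as the paper: translate $k$-movability into the statement that $\mathbf{B}_{-}(L)$ has no component of codimension at most $k$, feed an irreducible $V \subseteq \mathbf{B}_{-}(L)$ of codimension $k$ into Proposition \ref{mainprop} for the forward direction, and for the converse show that a negative intersection forces the covered subvariety into $\mathbf{B}_{-}(L)$. The paper declares both the dictionary and the converse immediate, whereas you supply workable justifications for them; the only blemish is the unjustified (and unnecessary) assertion that $g|_{\widetilde{S}}$ is birational onto $V$, since $\widetilde{S}$ may have larger dimension than $V$, but your subsequent argument never actually uses this.
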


\begin{proof}
To say that $L$ is not $k$-movable is equivalent to saying that $\mathbf{B}_{-}(L)$ has a component of dimension at least $k$.  Apply Proposition \ref{mainprop} to obtain the forward implication.  The converse is immediate.
\end{proof}

\begin{rmk}
It is unclear whether Corollary \ref{maincor} is the best formulation possible for the duality of $k$-movable divisors.  For Mori Dream Spaces varieties and for $2 < k < \dim X$, \cite{payne06} Theorem 1 and \cite{choi11} Corollary 3 prove a slightly stronger statement.  The essential difference is that one does not need to blow-up along top-dimensional components of $\mathbf{B}_{-}(L)$.  More precisely, if $L$ is not $k$-movable, one may find a $\mathbb{Q}$-factorial small modification $f: X \dashrightarrow X'$ that is regular at the generic point of a component $V \subset \mathbf{B}_{-}(L)$ of codimension at most $k$ and a family of curves covering the strict transform of $V$ with $f_{*}L \cdot C < 0$.  In contrast, Corollary \ref{maincor} may produce a birational map that is not regular at any point of $V$.
\end{rmk}

\nocite{*}
\bibliographystyle{amsalpha}
\bibliography{movcone}

\providecommand{\bysame}{\leavevmode\hbox to3em{\hrulefill}\thinspace}
\providecommand{\MR}{\relax\ifhmode\unskip\space\fi MR }
\providecommand{\MRhref}[2]{%
  \href{http://www.ams.org/mathscinet-getitem?mr=#1}{#2}
}
\providecommand{\href}[2]{#2}
\begin{thebibliography}{BDPP04}

\bibitem[BDPP04]{bdpp04}
S.~Boucksom, J.P. Demailly, M.~P{\v a}un, and T.~Peternell, \emph{The
  pseudo-effective cone of a compact {K\"a}hler manifold and varieties of
  negative {K}odaira dimension}, 2004,
  http://www-fourier.ujf-grenoble.fr/{$\sim$}demailly/manuscripts/coneduality.pdf,
  submitted to J. Alg. Geometry.

\bibitem[BFJ09]{bfj09}
S.~Boucksom, C.~Favre, and M.~Jonsson, \emph{Differentiability of volumes of
  divisors and a problem of {T}eissier}, J. Algebraic Geom. \textbf{18} (2009),
  no.~2, 279--308.

\bibitem[Bou04]{boucksom04}
S.~Boucksom, \emph{Divisorial {Z}ariski decompositions on compact complex
  manifolds}, Ann. Sci. {\'E}cole Norm. Sup. \textbf{37} (2004), no.~4, 45--76.

\bibitem[Cho10]{choi10}
S.R. Choi, \emph{On the dual of the mobile cone}, 2010, arXiv:1103.0440v3
  [math.AG], to appear in Math. Z.

\bibitem[Cho11]{choi11}
\bysame, \emph{Duality of the cones of divisors and curves}, 2011,
  {http://math.postech.ac.kr/\textasciitilde sungrakc/paper/dualcones-SRC.pdf},
  to appear in Math. Res. Lett.

\bibitem[HK00]{hk00}
Y.~Hu and S.~Keel, \emph{Mori dream spaces and git}, Michigan Math. J.
  \textbf{48} (2000), 331--348.

\bibitem[Huy03]{huybrechts03}
D.~Huybrechts, \emph{The k{\"ahler} cone of compact hyperk{\"ahler} manifold},
  Math. Ann. \textbf{326} (2003), no.~3, 499--513.

\bibitem[Kle66]{kleiman66}
S.~Kleiman, \emph{Toward a numerical theory of ampleness}, Ann. of Math.
  \textbf{84} (1966), no.~3, 293--344.

\bibitem[Leh10]{lehmann10}
B.~Lehmann, \emph{Comparing numerical dimensions}, 2010, arXiv:1103.0440v3
  [math.AG].

\bibitem[Nak04]{nakayama04}
N.~Nakayama, \emph{Zariski-decomposition and abundance}, MSJ Memoirs, vol.~14,
  Mathematical Society of Japan, Tokyo, 2004.

\bibitem[Pay06]{payne06}
S.~Payne, \emph{Stable base loci, movable curves, and small modifications, for
  toric varieties}, Math. Zeit. \textbf{253} (2006), no.~2, 421--431.

\end{thebibliography}

\end{document}